
\documentclass[11pt]{amsart}
%%%%%%%%%%%%%%%%%%%%%%%%%%%%%%%%%%%%%%%%%%%%%%%%%%%%%%%%%%%%%%%%%%%%%%%%%%%%%%%%%%%%%%%%%%%%%%%%%%%%%%%%%%%%%%%%%%%%%%%%%%%%%%%%%%%%%%%%%%%%%%%%%%%%%%%%%%%%%%%%%%%%%%%%%%%%%%%%%%%%%%%%%%%%%%%%%%%%%%%%%%%%%%%%%%%%%%%%%%%%%%%%%%%%%%%%%%%%%%%%%%%%%%%%%%%%
\usepackage{amssymb}
\usepackage{amsfonts}
\usepackage{amsmath}
\usepackage{amsthm}

\setcounter{MaxMatrixCols}{10}

\usepackage[english]{babel}
\usepackage[latin1]{inputenc}
\usepackage{amsmath,amssymb,amsthm,epsfig}
\usepackage{eufrak}
\usepackage{color}
\usepackage{mathrsfs}
\usepackage{latexsym}

\theoremstyle{plain}

\newtheorem{teo}{Theorem}[section]

\newtheorem{defi}{Definition}[section]
\newtheorem{lema}{Lemma}[section]

\theoremstyle{definition}
\newtheorem{exe}{Example}

\begin{document}
\title{Engel theorem through singularities }
\author{M. Corr\^ea}
\address{M. Corr\^ea,\
ICEx - UFMG \\
Departamento de Matem\'atica \\
Av. Ant\^onio Carlos 6627 \\
30123-970 Belo Horizonte MG, Brazil
\\
} \email{mauricio@mat.ufmg.br}
\author{Luis. G. Maza}
\address{
Luis. G. Maza
\\Universidade Federal de Alagoas, Instituto de
Matem\' atica. Av. Lourival Melo Mota, s/n, Tabuleiro dos Martins
57072-970 - Maceio, AL-Brasil
\\
} \email{lmaza@im.ufal.br}
\date\today
\subjclass{}

\begin{abstract}
We prove a singular version of the Engel theorem.
We prove a normal form theorem for germs of holomorphic singular
Engel systems with good conditions on its singular set. As an
application, we prove that there exists an integral analytic curve
passing through the singular points of the system. Also, we prove
that a globally decomposable Engel system on a four dimensional
projective space has singular set with atypical codimension.
\end{abstract}

\maketitle

\section{Introduction}

A germ of \emph{ holomorphic Pfaff system} of codimension $k$ on
$(\mathbb{C}^n,0)$ is a subsheaf $\mathcal{I}$ of the cotangent
sheaf $\Omega^1_{\mathbb{C}^n}$ of $(\mathbb{C}^n,0)$ spanned by $k$
germs of holomorphic differential $1$-forms
$\omega_{1},\ldots,\omega_{k}$ assumed 
linearly independent at a generic point near 0. We will write
$\mathcal{I}=\langle\omega_{1},\ldots,\omega_{k}\rangle$. This
system can be represented by the holomorphic $k$-form
$\omega_{1}\wedge\ldots\wedge\omega_{k}.$ The singular set of
$\mathcal{I}$ is the analytic subset given by
$$
\mathrm{Sing}(\mathcal{I})=\{p\in (\mathbb{C}^n,0); \
(\omega_{1}\wedge\cdots \wedge\omega_{k})(p)=0\}.
$$
Therefore $\mathrm{Sing}(\mathcal{I})$ is defined by $k \times k$
determinants of an $n \times k$ matrix. Therefore, each irreducible
component has codimension at most $k + 1$. We say that the singular
set of $\mathcal{I}$ has \emph{expected codimension}  if it is a (startified) submanifold of $\mathbb{C}^n$ of codimension  $k+1$.

Let $\mathcal{C}=V(A)$ be a germ of analytic subset in
$(\mathbb{C}^n,0)$ of codimension $\leq k,$ with zeros ideal
$A$. If
$A=\langle f_1,\dots,f_r\rangle$, then we denote by
$dA$ the Pfaff system spanned by $df_1,\dots,df_r$.

By definition, we say that $\mathcal{C}=V(A)$ is an \emph{integral}
variety of $\mathcal{I}=\langle\omega_{1},\ldots,\omega_{k}\rangle$
 if
$$
\omega_{i} \wedge dA\in A\otimes
\Omega^{r+1}_{\mathbb{C}^n}, \ \mathrm{for}\ \mathrm{each} \
i=1,\dots,k.
$$

A Pfaff system is called integrable $\mathcal{I} $   if 
$$
d\mathcal{I} \equiv 0\mod\mathcal{I}.
$$
If $\mathcal{I} $ is integrable, by the classical Frobenius's
Theorem, for all points $p\in
(\mathbb{C}^n,0)\backslash \mathrm{Sing}(\mathcal{I})$ there exists
an integral complex analytic manifold of codimension $k$ passing
through $p$. In \cite{Malgrange} B. Malgrange obtained a
Frobenius's Theorem for singular integrable systems with singular
set of codimension $\geq 3,$ showing the existence of integral
varieties passing through the singular points of the system.

For a germ of Pfaff system $\mathcal{I}$, we can define its
\emph{derived flag}
$\mathcal{I}^{(0)}\supset\mathcal{I}^{(1)}\supset\cdots$ by the
relations $\mathcal{I}^{(0)}=\mathcal{I}$ and
$$\mathcal{I}^{(i+1)}=\{\alpha
\in\mathcal{I}^{(i)}: d \alpha \equiv 0 \mod\mathcal{I}^{(i)}\}.
$$
Then, the derived flag of a Pfaff system $\mathcal{I}$ is defined
inductively by the exact sequence
$$
0\longrightarrow \mathcal{I}^{(i+1)} \longrightarrow
\mathcal{I}^{(i)} \longrightarrow
d\mathcal{I}^{(i)}/\left(\mathcal{ I}^{(i)} d
\mathcal{I}^{(i)}\right)\longrightarrow 0.
$$
Since the codimension of each Pfaff system $\mathcal{I}^{(i)}$ is
constant on an open and dense set in a neighbourhood of $0$,  then there will be an integer $N$ such that
$\mathcal{I}^{(N)}=\mathcal{I}^{(N+1)}$. This integer $N$ is called
the \emph{derived length} of $\mathcal{I}$. 
Note that the Pfaff system
$\mathcal{I}^{(N)}$ is always integrable   since
$$
d\mathcal{I}^{(N)}\equiv 0\mod\mathcal{I}^{(N)}.
$$
If $\mathcal{I}^{(N)}=0$ we say that the system $\mathcal{I}$ is
\emph{completely nonholonomic}. See \cite{BCGGG} for more details.

Let $ \Theta_n$ be the sheaf of germs of holomorphic vector fields on $(\mathbb{C}^n,0) $.
A local $(r, n)$ holomorphic  distribution is the germ at $0\in \mathcal{C}^n$ of a rank $r$ subbundle of the tangent bundle $\Theta_n$

A Pfaff system $\mathcal{I}=\langle\omega_{1},\ldots,\omega_{k}\rangle$ of codimension $k$ induces  a  singular $(n-k, n)$ distribution  defined  by
$$
\mathcal{D}^0:=\mathrm{Ker}(\mathcal{I})=\{v \in \Theta_n;\ \omega_i(v)\equiv 0,\ \forall \ i\}.
$$
Thus, the induced distributions of the derived flag of $\mathcal{I}$ is given by
$$
\mathcal{D}^i:=\mathrm{Ker}(\mathcal{I}^i)= \mathrm{Ker}(\mathcal{I}^{i+1})+ [\mathrm{Ker}(\mathcal{I}^{i+1}), \mathrm{Ker}(\mathcal{I}^{i+1})],
$$
where $[\mathcal{D},\mathcal{D}]$ denotes the sheaf generated by Lie brackets
$[u,v ]$ with $u,v\in \mathcal{D}.$

A contact system on $(\mathbb{C}^3,0)$ is a completely nonholonomic
system. The classical Darboux-Pfaff theorem gives a normal form for
non-singular contact system. That is, if $\mathcal{I}$ is a
non-singular contact system, then there exist a germ of coordinate
system on $(\mathbb{C}^3,0)$ such that
$$
\mathcal{I}=\langle dz_3-z_2dz_1 \rangle.
$$
D. Cerveau showed in  \cite{C}  a  singular version of the Pfaff-Darboux theorem.

In this work we are interested in completely nonholonomic $(2, 4)$ holomorphic distributions 
with  derived length equal
to $2$.

\begin{defi}\label{defi Engel}\footnote{We would like to thank the anonymous referee for  suggest  us  this definition }
A local    \emph{  Engel distribution}  is a $(2, 4)$ holomorphic distribution  $\mathcal{D}\subset  \Theta_4$ satisfying the following conditions:
\begin{itemize}
\item[(i)] $\mathcal{D}^1= \mathcal{D} + [\mathcal{D} , \mathcal{D}]$ has rank three at a generic point near $0$.
\item[(ii)]  $\mathcal{D}^2= \mathcal{D}^1 + [\mathcal{D}^1 , \mathcal{D}^1]$ has rank four  at a generic point near $0$.
\end{itemize}
\end{defi}
Let $v_1,v_2\in \mathcal{D}$ germs on $(\mathbb{C}^4,x)$  of   generators  of the stalk  $ \mathcal{D}_x$. Then
$$\langle v_1,v_2, [v_1,v_2]\rangle=\mathcal{D}_{x}^1$$
and
$$\langle v_1,v_2, [v_1,v_2],[v_1,[v_1,v_2]], [v_2,[v_1,v_2]]\rangle=\mathcal{D}_{x}^2=\Theta_{4,x}.$$
Therefore, the singular set of $\mathcal{D}^1$ is set of  the dependence  of  $ v_1,v_2, [v_1,v_2]$ and 
the germ on $x$ of singular set of $\mathcal{D}^2$  is  the set of  dependence 
 $$ v_1\wedge v_2 \wedge [v_1,v_2]\wedge[v_1,[v_1,v_2]]\wedge [v_2,[v_1,v_2]]=0.$$
Moreover, the distribution $\mathcal{D}^1$ is induced by the $1$-form
$$
\beta=i_{v_1} i_{v_2}i_{[v_1,v_2]} (dz_1\wedge dz_2 \wedge dz_3\wedge dz_4 ).
$$
To the distribution $\mathcal{D}$ we associate the so called Engel vector field $Y$ defined by
$$
i_Y  (dz_1\wedge d_2 \wedge dz_3\wedge dz_4 )=\beta \wedge d\beta.
$$
In particular, $\mathrm{Sing}(\mathcal{D}^1)= \mathrm{Sing}(\beta)\subset  \mathrm{Sing}(Y)$. We will call of \textit{characteristic Engel foliation} of $\mathcal{D}$
the rank one vector bundle $\mathcal{L}(\mathcal{D})\subset \mathcal{D}$ induced by the Engel vector fields.

 In this work our main theorem is the following.
\begin{teo}\label{main}
Let $\mathcal{D}$ be a germ of holomorphic
Engel distribution on $(\mathbb{C}^4,0)$ with
$\mathrm{codim}(\mathrm{Sing}(\mathcal{D}))\geq 2$ and
$\mathrm{codim}(\mathrm{Sing}(\mathcal{L}(\mathcal{D}))\geq 3$. Then there
exist $f_1,\dots,f_4\in\mathcal{O}_0^4$ such the distribution $\mathcal{D}$ is induced by 
the system
$$
\langle df_4-f_3df_1, df_3-f_2df_1\rangle.
$$
\end{teo}

This result is consequence of a theorem on normal form of Pfaff system of derived length
equal to $2$, see Theorem \ref{teo}. In \cite{Ge} and \cite[Appendix]{M} is showed that there exist a pair of $1$-form $(\alpha,\beta)$ inducing a regular  Engel distribution  $\mathcal{D}$ satisfying
 \begin{itemize}
\item[(i)] $\alpha\wedge\beta \wedge d\alpha \not\equiv 0$
\item[(ii)] $\alpha\wedge\beta \wedge d\beta\equiv 0$
\item[(iii)] $\beta \wedge d\beta \not\equiv 0 $.
\end{itemize}

This motivates  us  the following definition.
\begin{defi}\label{defi Engel}
A germ of \emph{ singular Engel system} in $(\mathbb{C}^4,0)$ is a Pfaff
system $\mathcal{I} $, of codimension $2$,  which can be described by $1$-forms $\alpha$ and $\beta $ satisfying the following conditions:
\begin{itemize}
\item[(i)] $\alpha\wedge\beta \wedge d\alpha \not\equiv 0$
\item[(ii)] $\alpha\wedge\beta \wedge d\beta\equiv 0$
\item[(iii)] $\beta \wedge d\beta \not\equiv 0 $,
\end{itemize}
\end{defi}
We can see that a germ of singular \emph{Engel system} $\mathcal{I} $  in $(\mathbb{C}^4,0)$
is a system of codimension $2$ such that, for $0 \leq i \leq 2$,
the elements of its derived flag satisfy
$\mathrm{codim}(\mathcal{I}^{(i)})=2-i$. In fact, $\mathcal{I}^{(0)}=\langle
\alpha,\beta \rangle$, $\mathcal{I}^{(1)}=\langle \beta \rangle$
and $\mathcal{I}^{(2)}=0$. Thus, an Engel system has derived length
equal to $2$. Moreover, we have that $d\mathcal{I}^{(1)}=\langle d\beta \rangle$. In this case,
we define the singular set of $d\mathcal{I}^{(1)}$ by
$$
\mathrm{Sing}(d\mathcal{I}^1):=\{ p\in \mathrm{Sing}(\mathcal{I}^1) ;\ d\beta(p)=0 \}
$$
Observe that $ \mathrm{Sing}(d\mathcal{I}^1)$ is well defined. In fact, let $\delta=g \beta $ , for some $g\in \mathcal{O}_0^*$. Here $ \mathcal{O}_0^*$ germs of nowhere vanishing holomorphic functions.
Then, if $p\in \mathrm{Sing}(\mathcal{I}^1)$ then $d\delta(p)=g(p) d\beta(p)+\beta(p)\wedge dg(p)=g(p) d\beta(p)$. We conclude that $d\delta(p)=0$ if and only if $d\beta(p)=0$ for all $p\in \mathrm{Sing}(\mathcal{I}^1)$, since $g\in \mathcal{O}_0^*$.

\begin{exe}
The conditions of Theorem \ref{main} are  necessary. Consider the family of Engel distribution given by 
$$
\mathcal{D}_r=\mathrm{Ker}(\langle dz_4-z_3^rdz_1, dz_3-z_2dz_1 \rangle),
$$
with $r\geqslant 2$.
Set $\beta_r:=dz_4-z_3^rdz_1$ and $\alpha_r= dz_3-z_2dz_1$. A calculation shows that the pair of $1$-forms $(\alpha,\beta)$ satisfy the conditions $i)$,$ii)$ and $iii)$ of definition \ref{defi Engel} and the following hold
\begin{equation}\label{exemplo}
\beta_r \wedge d\beta_r=-rz_3^{r-1} dz_4\wedge dz_3\wedge dz_1 
\end{equation}
and
\begin{equation}\label{exemplo2}
\beta_r \wedge \alpha_r  \wedge d\alpha_r= -dz_4\wedge dz_3\wedge dz_2\wedge dz_1.
\end{equation}
For each $r$, the Engel distribution  $\mathcal{D}_r$
is regular but $\mathrm{codim}(\mathrm{Sing}(\mathcal{L}(\mathcal{D}))=1$, since 
$$
\mathrm{Sing}(\mathcal{L}(\mathcal{D}_r))=\mathrm{Sing}(\beta_r \wedge d\beta_r)=\{z_3^{r-1}=0\}.
$$
On the other hand, the  distribution $\mathcal{D}_r$ can not be described by  two differential $1$-forms of the form
$$
\langle df_4-f_3df_1, df_3-f_2df_1\rangle.
$$
Let us suppose by absurd that  $\beta_r=df_4-f_3df_1$ and $\alpha_r= df_3-f_2df_1$. In this case, we have that
\begin{center}
$\beta_r \wedge d\beta_r=-df_3\wedge df_2\wedge df_1$ and  
$\beta_r \wedge \alpha_r  \wedge d\alpha_r= -df_4\wedge df_3\wedge df_2\wedge df_1$.
\end{center}
This implies that 
\begin{center}  
$\beta_r \wedge \alpha_r  \wedge d\alpha_r= df_4\wedge \beta_r \wedge d\beta_r $.
\end{center}
Substituting (\ref{exemplo}) we obtain 
\begin{center}  
$\beta_r \wedge \alpha_r  \wedge d\alpha_r= df_4\wedge \beta_r \wedge d\beta_r =df_4\wedge(-rz_3^{r-1} dz_4\wedge dz_3\wedge dz_1)$.
\end{center}
Thus, 
\begin{center}  
$ \beta_r \wedge \alpha_r  \wedge d\alpha_r=
-r\frac{\partial f_4}{\partial z_2}z_3^{r-1} dz_4\wedge dz_3 \wedge dz_2\wedge dz_1$.
\end{center}
An absurd, since it follows from (\ref{exemplo2}) that the $4$-form $\beta_r \wedge \alpha_r  \wedge d\alpha_r$ is not singular. 
\end{exe}

In the real non-singular case, these Pfaff systems were introduced
by E. von Weber in 1898 and studied by several authors
\cite{Cartan}\cite{GKR}\cite{PR}. F. Engel \cite{Engel} shows that a
non-singular generic Engel system is locally isomorphic, at a generic point,
to the canonical system
\begin{equation}
\label{canonical}
\mathcal{I}_0=\langle dz_4-z_3dz_1, dz_3-z_2dz_1 \rangle.
\end{equation}
That is, F. Engel provides a kind of Pfaff-Darboux type theorem for
non-singular Pfaff systems of codimension $2$ and derived length
equal to $2$. M. Zhitomirskii in \cite{Z} obtained normal forms for
real non-singular Engel along non generic points.

The canonical system appears naturally as a system called canonical
contact system on the space $J^{2}(\mathbb{C},\mathbb{C})$ of
$2$-jets of holomorphic maps of $\mathbb{C}$, see \cite{PR}.
Nonsingular global holomorphic Engel systems have been studied by
L. Sol\'a Conde and F. Presa in \cite{PS}.

We prove the following result for germs of holomorphic Engels system
in $(\mathbb{C}^4,0)$
\begin{teo}\label{teo}
Let $\mathcal{I}=\langle\alpha,\beta\rangle$ be a germ of holomorphic
Engel system on $(\mathbb{C}^4,0)$ with
$\mathrm{codim}(\mathrm{Sing}(\mathcal{I}))\geq 2$ and
$\mathrm{codim}(\mathrm{Sing}(d\beta))\geq 3$. Then there
exist $f_1,\dots,f_4\in\mathcal{O}_0^4$ such
that
$$
\mathcal{I}=\langle df_4-f_3df_1, df_3-f_2df_1\rangle.
$$
More precisely, there exists a germ of holomorphic map
$f:=(f_1,f_2,f_3,f_4): (\mathbb{C}^4,0)\circlearrowleft$ which is a
biholomorphism outside $\mathrm{Sing}(\mathcal{I})\cup
\mathrm{codim}(\mathrm{Sing}(d\beta))$ such that
$f^*\mathcal{I}_0=\mathcal{I}$.
\end{teo}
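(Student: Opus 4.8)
The plan is to build the coordinate functions $f_1,\dots,f_4$ in two stages, first reconstructing the integrable subsystem $\mathcal I^{(1)}=\langle\beta\rangle$ and then bootstrapping up to the full system. By hypothesis $\beta\wedge d\beta\neq 0$, so $\beta$ has class $r\geq 1$; on the other hand condition (ii) says $\alpha\wedge\beta\wedge d\beta\equiv 0$, which means $d\beta$ lies in the ideal generated by $\alpha$ and $\beta$, forcing $\beta\wedge(d\beta)^2\equiv 0$, so in fact the class of $\beta$ is exactly $1$. Since $\mathrm{cod}(\mathrm{Sing}(d\mathcal I^{(1)}))=\mathrm{cod}(\mathrm{Sing}(d\beta))\geq 3$, Theorem \ref{Pfaff-Cerveau} applies with $r=1$ and yields $f_1,f_3,g\in\mathcal O^4_0$ with $\beta=f_3\,df_1+dg$. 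Absorbing the exact term, I set $f_4:=g$ (after checking orientation/roles) so that, up to replacing $\beta$ by the generator $df_4-f_3\,df_1$ of the same rank-one sheaf, we have $\mathcal I^{(1)}=\langle df_4-f_3\,df_1\rangle$ with $df_1\wedge df_3\wedge df_4\neq 0$ generically (this nonvanishing comes from $\beta\wedge d\beta\neq 0$, which reads $(df_4-f_3df_1)\wedge df_3\wedge df_1\neq 0$).

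The second stage reconstructs $f_2$ and fixes the remaining coordinate. We know $\mathcal I=\langle\alpha,\beta\rangle\supset\langle\beta\rangle$, so $\alpha$ is determined modulo $\beta$ and modulo multiplication; equivalently $\mathcal I$ corresponds, outside the singular locus, to a plane field containing the line field $\langle\beta\rangle^{\perp}$... more precisely $\mathcal I$ is recovered from the pair $(\beta,\alpha)$. The idea is that, having three functionally independent functions $f_1,f_3,f_4$, the $1$-form $\alpha$ must — modulo $\beta$ and modulo a unit — be expressible using $df_1,df_3,df_4$; condition (i), $\alpha\wedge\beta\wedge d\alpha\neq 0$, together with (ii) will pin down $\alpha$ to be proportional to $df_3-f_2\,df_1$ for a suitable holomorphic function $f_2$. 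Concretely, I would write $\alpha\equiv a\,df_1+b\,df_3+c\,df_4\pmod{\beta}$, use that $df_4-f_3df_1\in\mathcal I$ to reduce further, and then exploit the derived-flag identity $d\alpha\equiv 0\mod\mathcal I$ is \emph{false} (that's exactly condition (i)) while $\mathcal I^{(1)}=\langle\beta\rangle$ \emph{is} derived — this asymmetry forces the coefficient structure. Setting $f_2:=-(\text{coefficient ratio})$ and checking $df_2\wedge df_1\wedge df_3\wedge df_4\neq 0$ generically (again from (i)) gives $\alpha\sim df_3-f_2\,df_1$.

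It then remains to verify that $f=(f_1,f_2,f_3,f_4)$ is the desired map: that $f^*\mathcal I_0=\mathcal I$ is immediate from the two computations above once we know $\langle df_4-f_3df_1,\,df_3-f_2df_1\rangle=\langle\beta,\alpha\rangle$ as subsheaves of $\Omega^1$; and that $f$ is a biholomorphism away from $\mathrm{Sing}(\mathcal I)\cup\mathrm{Sing}(d\mathcal I^{(1)})$ follows because the Jacobian of $f$ is, up to units, $df_1\wedge df_2\wedge df_3\wedge df_4$, which is nonzero precisely where conditions (i)–(iii) hold nondegenerately, i.e. off the stated singular set — here I would need the precise comparison between the zero locus of $df_1\wedge\cdots\wedge df_4$ and $\mathrm{Sing}(\mathcal I)\cup\mathrm{Sing}(d\mathcal I^{(1)})$, using the matrix description of the singular set given in the introduction.

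The main obstacle I anticipate is the second stage: extracting the function $f_2$ as a \emph{holomorphic} (not merely meromorphic) function across the singular set. The Darboux–Pfaff–Cerveau theorem handles the first stage cleanly because it is exactly a class-one normalization with the codimension-$3$ hypothesis on $\mathrm{Sing}(d\beta)$; but there is no such black box for producing $f_2$. The natural candidate for $f_2$ will a priori be a ratio of holomorphic functions, regular only off $\mathrm{Sing}(\mathcal I)$, and the content of the theorem is that the hypothesis $\mathrm{cod}(\mathrm{Sing}(\mathcal I))\geq 2$ is enough to extend it holomorphically — this should follow from Hartogs-type / normality arguments once one shows the indeterminacy locus has codimension $\geq 2$ and $f_2$ is bounded, but making this rigorous (and compatible with the Engel conditions, so that the extended $f_2$ still satisfies $\alpha\sim df_3-f_2df_1$ identically) is where the real work lies. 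A secondary technical point is ensuring the choices made when applying Theorem \ref{Pfaff-Cerveau} (which $f_i$ plays which role) are consistent with the Engel structure, i.e. that the generic independence $df_1\wedge df_2\wedge df_3\wedge df_4\neq 0$ actually holds rather than degenerating.
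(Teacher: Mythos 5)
Your first stage coincides with the paper's: $\beta$ has class $1$ and Theorem \ref{Pfaff-Cerveau}, via $\mathrm{cod}(\mathrm{Sing}(d\beta))\geq 3$, gives $\beta=df_4-f_3\,df_1$ (your remark that condition (ii) alone already forces $\beta\wedge(d\beta)^2\equiv 0$ is correct, and is even slightly stronger than the paper's dimension argument). The gap is in your second stage. Your starting point --- that $\alpha$ ``must'' be expressible as $a\,df_1+b\,df_3+c\,df_4$ modulo $\beta$ because $f_1,f_3,f_4$ are functionally independent --- is false as stated: since $\beta=df_4-f_3\,df_1$, the forms $df_1,df_3,df_4,\beta$ span only a three-dimensional subspace of the cotangent space at each point, so a generic $1$-form does \emph{not} lie in their span; that $\alpha$ does is exactly what must be proved, and neither functional independence nor the ``asymmetry'' of the derived flag you invoke produces it. The actual mechanism, which is the heart of the paper's proof and is absent from your outline, is that $d\beta=df_1\wedge df_3$, so condition (ii) reads $df_1\wedge df_3\wedge\alpha\wedge\beta\equiv 0$, and this pointwise linear-algebra identity, outside $\mathrm{Sing}(\alpha\wedge\beta)\cup\mathrm{Sing}(d\beta)$, yields $\alpha=\tilde a\,df_1+\tilde b\,df_3+\tilde\lambda\,\beta$ there.

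You also place the extension problem in the wrong spot: you propose to extend $f_2$ itself across $\mathrm{Sing}(\mathcal{I})$ by a Hartogs-plus-boundedness argument, but no boundedness of $f_2$ is in sight and this is not how the difficulty is resolved. The paper applies Hartogs to the coefficients $\tilde a,\tilde b,\tilde\lambda$, holomorphic off the analytic set $\mathrm{Sing}(\alpha\wedge\beta)\cup\mathrm{Sing}(d\beta)$, which has codimension $\geq 2$ precisely by the two hypotheses of the theorem; this gives $\alpha-\lambda\beta=a\,df_1+b\,df_3$ with $a,b,\lambda\in\mathcal{O}_0^4$, condition (i) forces $a\neq 0$ and $b\neq 0$ (otherwise $\alpha\wedge\beta\wedge d\alpha\equiv 0$), and only then is $f_2:=-a/b$ defined. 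Finally, the biholomorphism statement, which you leave as ``a precise comparison to be done,'' is a genuine computation in the paper rather than a formality: differentiating $\frac{1}{b}\alpha=\frac{a}{b}df_1+df_3+\frac{\lambda}{b}\beta$ and wedging with $\beta\wedge\alpha$, using $d\beta\wedge\beta\wedge\alpha\equiv 0$, gives $b\,df_2\wedge df_1\wedge df_4\wedge df_3=\frac{1}{b}\,d\alpha\wedge\beta\wedge\alpha$, which is nonvanishing off $\mathrm{Sing}(\alpha\wedge\beta)\cup\mathrm{Sing}(d\beta)$ by condition (i). Without the decomposition step and this Jacobian identity, the second half of your plan does not go through.
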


%We note that the only place the dimension assumption is used in the
%proof is to guarantee that $\beta \wedge (d\beta)^2\equiv 0$.
%Therefore, we obtain the following result which will be useful in
%application to Engel systems on four dimensional projective spaces.
%\begin{cor}\label{coro}

%Let $\mathcal{I}=\langle\alpha,\beta\rangle$ be a Pfaff system of
%codimension $2$ on $(\mathbb{C}^n,0)$ satisfying the following
%conditions:
%\begin{itemize}
%\item[(i)] $\alpha\wedge\beta \wedge d\alpha\not\equiv 0$
%\item[(ii)] $\alpha\wedge\beta \wedge d\beta\equiv 0$
%\item[(iii)] $\beta \wedge d\beta\not\equiv 0 $ and $\beta \wedge (d\beta)^2\equiv 0 $
%\end{itemize}
%If $\mathrm{cod}(\mathrm{Sing}(\mathcal{I}))\geq 2$ and
%$\mathrm{cod}(\mathrm{Sing}(d\mathcal{I}^{(1)}))\geq 3$, then there
%exist $f_1,\dots,f_4\in\mathcal{O}_0^n$ such that
%$$
%\mathcal{I}=\langle df_4-f_3df_1, df_3-f_2df_1\rangle.
%$$
%\end{cor}

Normal forms allows us to prove the existence of integral
submanifolds. An interesting consequence of Theorem \ref{teo} is the
existence of an integral analytic curve passing through the
singular points of the system.
\begin{teo} Let $\mathcal{I}=\langle\alpha,\beta\rangle$ be a germ of holomorphic
Engel system on $(\mathbb{C}^4,0)$ with
$\mathrm{codim}(\mathrm{Sing}(\mathcal{I}))\geq 2$ and
$\mathrm{codim}(\mathrm{Sing}(d\beta))\geq 3$, then there
exists a germ of an analytic curve passing through
$\mathrm{Sing}(\mathcal{I})$ which is a solution of $\mathcal{I}$.
\end{teo}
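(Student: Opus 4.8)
The strategy is to reduce to the canonical model. By Theorem \ref{teo}, after composing with the germ of holomorphic map $f = (f_1,f_2,f_3,f_4)$, we may work with the canonical Engel system $\mathcal{I}_0 = \langle dz_4 - z_3\,dz_1,\ dz_3 - z_2\,dz_1\rangle$ on $(\mathbb{C}^4,0)$. First I would exhibit an explicit integral curve of $\mathcal{I}_0$ through the origin: the curve $\gamma_0(t) = (t, 0, 0, 0)$ (or more generally any curve of the form $t \mapsto (t, g'(t), g(t)\cdot\text{adjustments}, \dots)$ coming from a $2$-jet $g$) satisfies $\gamma_0^*(dz_4 - z_3\,dz_1) = 0$ and $\gamma_0^*(dz_3 - z_2\,dz_1) = 0$, so it is a solution of $\mathcal{I}_0$ in the integral-variety sense of the Introduction. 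Concretely, with $g \equiv 0$ the image is the $z_1$-axis $\{z_2 = z_3 = z_4 = 0\}$, cut out by the ideal $\mathfrak{a}_0 = \langle z_2, z_3, z_4\rangle$, and one checks directly that each generator $dz_4 - z_3\,dz_1$, $dz_3 - z_2\,dz_1$ wedged with $dz_2 \wedge dz_3 \wedge dz_4$ lies in $\mathfrak{a}_0 \otimes \Omega^4$.

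Next I would pull this curve back. Set $C := f^{-1}(\{z_2 = z_3 = z_4 = 0\})$, equivalently the germ of analytic set $V(\langle f_2, f_3, f_4\rangle)$. Since $f$ is a biholomorphism away from $\mathrm{Sing}(\mathcal{I}) \cup \mathrm{Sing}(d\mathcal{I}^{(1)})$ and $f^*\mathcal{I}_0 = \mathcal{I}$, the analytic set $C$ is, generically, a smooth curve and is an integral variety of $\mathcal{I}$: indeed $\alpha = f^*(dz_4 - z_3\,dz_1)$ and $\beta = f^*(dz_3 - z_2\,dz_1)$, and the condition $\omega_i \wedge d\mathfrak{a} \in \mathfrak{a}\otimes \Omega^{r+1}$ defining an integral variety is preserved under pullback by a dominant (in particular, generically invertible) holomorphic map, because $f^*\mathfrak{a}_0 \subseteq \mathfrak{a}$ where $\mathfrak{a} = \langle f_2, f_3, f_4\rangle$. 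The one point needing care is that $C$ passes through $0 \in \mathrm{Sing}(\mathcal{I})$ and that the germ $C$ at $0$ is genuinely a curve (one-dimensional) rather than something of larger dimension collapsed by $f$.

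The main obstacle is therefore dimension-theoretic: controlling the fiber $f^{-1}(0)$, or more precisely showing that the germ at $0$ of $V(\langle f_2,f_3,f_4\rangle)$ has a one-dimensional component through the origin. I would handle this as follows. Off $\mathrm{Sing}(\mathcal{I}) \cup \mathrm{Sing}(d\mathcal{I}^{(1)})$ the map $f$ is a biholomorphism, so there $C$ is a smooth curve; by the codimension hypotheses $\mathrm{cod}(\mathrm{Sing}(\mathcal{I})) \geq 2$ and $\mathrm{cod}(\mathrm{Sing}(d\mathcal{I}^{(1)})) \geq 3$, this exceptional set has codimension $\geq 2$, so it cannot contain the smooth curve $C \setminus \{0\}$; hence the closure $\overline{C \setminus \{0\}}$ is a one-dimensional analytic germ, and by the analyticity of $C$ and the fact that $0$ lies in its closure (since $f(0)=0$), this closure is a component of $C$ passing through $0$. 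Taking the normalization of this germ gives the desired germ of analytic curve $\gamma : (\mathbb{C},0) \to (\mathbb{C}^4,0)$ which is a solution of $\mathcal{I}$ and passes through $\mathrm{Sing}(\mathcal{I})$, completing the proof.
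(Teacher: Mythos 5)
Your overall route is the same as the paper's: invoke Theorem \ref{teo} and read off an explicit coordinate-axis integral curve of the canonical model $\mathcal{I}_0$. The paper simply takes $\{f_1=f_3=f_4=0\}$ (for which the integrality products $\omega_i\wedge df_1\wedge df_3\wedge df_4$ vanish identically), while you take the preimage of the $z_1$-axis, $C=V(f_2,f_3,f_4)$; your choice works equally well, since $(df_4-f_3\,df_1)\wedge df_2\wedge df_3\wedge df_4=-f_3\,df_1\wedge df_2\wedge df_3\wedge df_4$ and $(df_3-f_2\,df_1)\wedge df_2\wedge df_3\wedge df_4=-f_2\,df_1\wedge df_2\wedge df_3\wedge df_4$ both lie in $\langle f_2,f_3,f_4\rangle\otimes\Omega^4$, and $0\in C$ because $f$ is a germ of self-map of $(\mathbb{C}^4,0)$.

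The one step that does not stand as written is your dimension argument: from $\mathrm{cod}\bigl(\mathrm{Sing}(\mathcal{I})\cup\mathrm{Sing}(d\mathcal{I}^{(1)})\bigr)\geq 2$ you infer that this exceptional set ``cannot contain the smooth curve $C\setminus\{0\}$''. A codimension-two analytic germ in $(\mathbb{C}^4,0)$ is two-dimensional and can certainly contain curves, and a priori $C$ could sit inside the exceptional set or have components of dimension $\geq 2$, so this inference proves nothing about the germ of $C$ at $0$. The repair is cheap and needs no biholomorphy off the singular set at all: since $f_2,f_3,f_4$ vanish at $0$, every irreducible component of $V(f_2,f_3,f_4)$ through $0$ has dimension at least one (Krull's height theorem); choose any germ of irreducible analytic curve $\gamma$ through $0$ inside such a component (slicing by generic hyperplanes through $0$ if the component has dimension $\geq 2$). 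Then $f_2\circ\gamma\equiv f_3\circ\gamma\equiv f_4\circ\gamma\equiv 0$, hence $\gamma^*(df_4-f_3\,df_1)=d(f_4\circ\gamma)-(f_3\circ\gamma)\,d(f_1\circ\gamma)=0$ and likewise $\gamma^*(df_3-f_2\,df_1)=0$, so the curve is a solution. (The paper leaves this same point implicit for its curve $\{f_1=f_3=f_4=0\}$, and the identical repair applies there.)
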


\begin{proof}
In fact, it follows from Theorem \ref{teo} that the analytic curve
$\{f_1=f_3=f_4=0\}$ is a solution of $\mathcal{I}=\langle
df_4-f_3df_1, df_3-f_2df_1\rangle.$
\end{proof}

Finally, we give Another application of Theorem \ref{teo} for
globally decomposable Engel system on four dimensional projective
space.

It is well known that all codimension one integrable systems in
$\mathbb{P}^n$ have in its singular set an irreducible component of
codimension two.
\begin{teo}\cite{Jouanolou} \cite{Alcides}
Let $\mathcal{I}$ be a codimension one integrable system on
$\mathbb{P}^n$, $n\geq 3$, such that
$\mathrm{codim}(\mathrm{Sing}(\mathcal{I}))\geq 2$. Then
$\mathrm{Sing}(\mathcal{I})$ has an irreducible component of
codimension two.
\end{teo}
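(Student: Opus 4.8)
The plan is to encode $\mathcal{I}$ as a global twisted $1$-form on $\mathbb{P}^n$ and then exploit integrability on the smooth locus of the foliation via a Bott-type vanishing. Being of codimension one, and with $\mathrm{cod}(\mathrm{Sing}(\mathcal{I}))\ge 2$ forcing it to be saturated, $\mathcal{I}$ is generated by a nonzero twisted holomorphic $1$-form $\omega\in H^0(\mathbb{P}^n,\Omega^1_{\mathbb{P}^n}(d))$ whose zero locus is $Z:=\mathrm{Sing}(\mathcal{I})$; here $\mathcal{O}_{\mathbb{P}^n}(d)$ is the normal line bundle of the foliation, and necessarily $d\ge 2$ since $H^0(\mathbb{P}^n,\Omega^1_{\mathbb{P}^n}(k))=0$ for $k\le 1$. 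Integrability is the relation $\omega\wedge d\omega=0$, and $c_1(\mathcal{O}_{\mathbb{P}^n}(d))=d\,h$, where $h$ is the hyperplane class. I would argue by contradiction, assuming every irreducible component of $Z$ has codimension $\ge 3$, so that $\dim_{\mathbb{C}}Z\le n-3$.

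The first step is a vanishing statement on $U:=\mathbb{P}^n\setminus Z$. There $\omega$ is nowhere zero and defines a smooth codimension one holomorphic foliation $\mathcal{F}$ with normal bundle $N:=\mathcal{O}_{\mathbb{P}^n}(d)|_{U}$. Involutivity yields a Bott connection on $N$: on a chart where $\omega$ is an honest generator one has $d\omega=\eta\wedge\omega$ for a local $1$-form $\eta$, and rescaling the generator by a unit $f$ replaces $\eta$ by $\eta+d\log f$, so after the standard adjustment by multiples of $\omega$ the local forms $\eta$ glue to a connection on $N$ whose curvature $\Theta$ represents (a constant times) $c_1(N)$. Applying $d$ to $d\omega=\eta\wedge\omega$ and using $\eta\wedge\eta=0$ gives $d\eta\wedge\omega=0$; since $\omega$ is nowhere zero on $U$, $d\eta$ is locally divisible by $\omega$, hence $d\eta\wedge d\eta=0$. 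Therefore $c_1(N)^2=0$ in $H^4(U,\mathbb{C})$.

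The second step compares the cohomology of $U$ with that of $\mathbb{P}^n$. Since $\dim_{\mathbb{C}}Z\le n-3$, the local cohomology group $H^{4}_{Z}(\mathbb{P}^n,\mathbb{C})$ vanishes (a closed set of complex codimension $\ge 3$ supports no cohomology in degrees $\le 5$; equivalently, $Z$ carries no Borel--Moore homology in the complementary degrees), so the exact sequence of the pair forces $H^4(\mathbb{P}^n,\mathbb{C})\to H^4(U,\mathbb{C})$ to be injective. Together with the first step this gives $c_1(\mathcal{O}_{\mathbb{P}^n}(d))^2=d^2h^2=0$ in $H^4(\mathbb{P}^n,\mathbb{C})$; but $h^2\ne 0$ because $n\ge 3$, so $d=0$, contradicting $d\ge 2$. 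Hence some irreducible component of $Z$ has codimension $\le 2$, and as $\mathrm{cod}(\mathrm{Sing}(\mathcal{I}))\ge 2$ that component has codimension exactly two, which is the claim.

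The technical heart is the first step — the Bott connection on $N|_U$ and the ensuing identity $c_1(N)^2=0$ — and it is exactly here that integrability is indispensable: it cannot be omitted, since a nondegenerate skew-symmetric form on $\mathbb{C}^{n+1}$ (with $n$ odd) induces a nonintegrable codimension one distribution on $\mathbb{P}^n$ with empty singular set. The remaining inputs are routine: the presentation $\omega\in H^0(\Omega^1_{\mathbb{P}^n}(d))$ with $d\ge 2$ is the standard dictionary for global Pfaff systems on projective space, and the injectivity $H^4(\mathbb{P}^n)\hookrightarrow H^4(U)$ is the usual local-cohomology estimate for a closed set of complex codimension $\ge 3$. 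An alternative to the Bott argument is the Baum--Bott localization for singular holomorphic foliations, which places $c_1(N)^2$ in the image of $H^4_Z(\mathbb{P}^n)\to H^4(\mathbb{P}^n)$ and then concludes identically.
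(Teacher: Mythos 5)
Your proposal is correct, and note that the paper does not actually prove this statement --- it is quoted from Jouanolou and Lins Neto --- so what you have written is a self-contained proof rather than a variant of an internal argument; it is essentially the classical Baum--Bott/Bott-vanishing proof. The three ingredients fit together as you say: a saturated $\omega\in H^0(\mathbb{P}^n,\Omega^1_{\mathbb{P}^n}(d))$ with $d\ge 2$ cutting out $Z=\mathrm{Sing}(\mathcal{I})$; the vanishing $c_1(N)^2=0$ in $H^4(U,\mathbb{C})$ for the nonsingular codimension-one foliation on $U=\mathbb{P}^n\setminus Z$; and the injectivity of $H^4(\mathbb{P}^n,\mathbb{C})\to H^4(U,\mathbb{C})$ when every component of $Z$ has codimension $\ge 3$, via the local-cohomology (Borel--Moore) estimate, which forces $d^2h^2=0$, a contradiction. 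This residue-theoretic route is different in flavor from Jouanolou's original, more algebraic treatment (division arguments applied to $\omega\wedge d\omega=0$ in homogeneous coordinates); what your approach buys is a one-line reason why integrability and $n\ge 3$ are both essential, exactly as you point out with the contact-structure example. One caveat in your first step: as stated, the adjustment "by multiples of $\omega$" does not literally make the local $\eta_i$ glue --- the discrepancies $\eta_i-\eta_j-d\log g_{ij}$ are multiples of $\omega$ forming a cocycle with values in $N^{-1}$ that need not be a coboundary, so one cannot correct the $\eta_i$ individually. The standard repair is to go the other way: choose a global $C^\infty$ connection on $N|_U$ whose $(0,1)$-part is $\bar\partial$ and which restricts to the Bott partial connection along the leaves (such connections exist by a partition of unity); in the local frames its connection form is then $\eta_i+a_i\omega_i$, so the curvature is locally divisible by $\omega_i$ and its square vanishes pointwise, giving $c_1(N)^2=0$ in de Rham cohomology --- which is precisely the Baum--Bott vanishing you already cite as an alternative. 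With that rewording the argument is complete.
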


Let $\mathcal{I}$ be a codimension one integrable system on a Fano
manifold such that $ \mathrm{Sing}(\mathcal{I})\neq \emptyset$. In
\cite[Corollary 4.7 ]{LPT} F. Loray, J. V. Pereira and F Touzet show
that if the canonical class of $\mathcal{I}$ is numerically trivial
then its singular set has a component of codimension two.

A similar situation appears in the study of singularities of Poisson
structures on Fano manifolds motivated by Bondal's conjecture
\cite{Bondal} \cite[Conjecture 4]{Be}. A. Polishchuk in \cite{Po}
showed that the rank of a nondegenerate Poisson structure on a Fano
variety of odd dimension drops along a subset of codimension two.

As an application of Theorem \ref{teo} we prove that a globally
decomposable Engel system on four dimensional projective space has a
singular set with atypical codimension. In fact, the expected
codimension of the singular set of a Pfaff system of codimension
$2$ should be $3$. But, the following Theorem shows that the
singular set of these systems has codimension $\leq 2.$

\begin{teo}
Let $\mathcal{I}$ be a globally decomposable holomorphic Engel
system on $\mathbb{P}^4$. Then, either
$\mathrm{Sing}(d\mathcal{I}^{(1)})$ has a component of codimension
two, or $\mathrm{Sing}(\mathcal{I})$ has a component of codimension
one. Moreover, if $\mathrm{codim}(\mathrm{Sing}(\mathcal{I}))\geq 2$,
then $\mathrm{Sing}(\mathcal{I})$ has a component of codimension
two.
\end{teo}

\section{Proof of the Theorem \ref{teo}}
\begin{proof}
Since $d\beta\wedge \beta\not\equiv 0$ and $(d\beta)^2\wedge \beta\equiv
0$, we have that $\beta$ has class $1$. By Cerveau's singular version of the
Pfaff-Darboux Theorem \cite{C} we get that there exist $f_1,f_3,f_4\in\mathcal{O}_0^4$
such that
$$
\beta=df_4-f_3df_1.
$$
In particular, $d\beta=df_1\wedge df_3$. Now, since
$d\beta\wedge\alpha\wedge\beta\equiv 0$ we get
$$
0=d\beta\wedge\alpha\wedge\beta=df_1\wedge
df_3\wedge\alpha\wedge\beta.
$$
This implies that there exist germs of holomorphic funtions
$\widetilde{a},\widetilde{b}$ and $\widetilde{\lambda}$ on $U=
(\mathbb{C}^4,0)-Sing(\alpha \wedge \beta)\cup Sing(d\beta) $ such
that
$$
\alpha|_{U}=\widetilde{a}df_1+\widetilde{b}df_3+\widetilde{\lambda}\beta.
$$
Since the codimension of $\mathrm{Sing}(\alpha \wedge \beta)\cup
\mathrm{Sing}(d\beta)$ is bigger than $2$, by Hartorgs' extension
Theorem we have the identity
$$
\alpha - \lambda\beta=adf_1+bdf_3, \ \mathrm{where} \ a,b,\lambda
\in \mathcal{O}_0^4
$$
on $(\mathbb{C}^4,0).$ Now if either $a = 0$ or $b= 0$, then
$\alpha\wedge\beta \wedge d\alpha\equiv 0$, a contradiction to
Engel's conditions. Thus $a \neq 0$ and $b \neq 0$, and therefore
$$
\frac{1}{b}\alpha - \frac{\lambda}{b}\beta=\frac{a}{b}df_1+df_3
$$
and if we set $f_2=-\frac{a}{b}$ then
$$
\frac{1}{b}\alpha - \frac{\lambda}{b}\beta=df_3-f_2df_1.
$$
Thus,
$$
\mathcal{I}=\langle\alpha,\beta\rangle=
\left\langle\alpha,\frac{1}{b}\alpha -
\frac{\lambda}{b}\beta\right\rangle=\langle
df_4-f_3df_1,df_3-f_2df_1\rangle.
$$
Now, we will prove that the map
$f:(:(\mathbb{C}^4,0)\circlearrowleft$ defined by $f_1,f_2,f_3,f_4)$
is a biholomorphism outside $Sing(\alpha \wedge \beta)\cup
Sing(d\beta)$. That is, we prove that
$$
df_1\wedge df_2 \wedge df_4\wedge df_3
$$
never vanishes outside $\mathrm{Sing}(\alpha \wedge \beta)\cup
\mathrm{Sing}(d\beta)$. Differentiating the identity
$$
\frac{1}{b}\alpha=\frac{a}{b}df_1+df_3+\frac{\lambda}{b}\beta.
$$
we get
$$
d\left(\frac{1}{b}\right)\wedge\alpha+\frac{1}{b}d\alpha=d\left(\frac{a}{b}\right)\wedge
df_1+d\left(\frac{\lambda}{b}\right)\wedge\beta+\frac{\lambda}{b}d\beta.
$$
Multiplying this identity by $\beta\wedge\alpha$ we obtain
$$
\left[d\left(\frac{a}{b}\right)\wedge
df_1+d\left(\frac{\lambda}{b}\right)\wedge\beta+\frac{\lambda}{b}d\beta
\right]\wedge\beta\wedge\alpha=d\left(\frac{a}{b}\right)\wedge
df_1\wedge \beta\wedge\alpha
$$
since $d\beta\wedge\beta\wedge\alpha\equiv 0$. Thus
$$
d\left(\frac{a}{b}\right)\wedge df_1\wedge \beta\wedge\alpha
=\left[d\left(\frac{1}{b}\right)\wedge \alpha +\frac{1}{b}d\alpha
\right]\wedge \beta\wedge\alpha.
$$
Therefore
$$
d\left(\frac{a}{b}\right)\wedge df_1\wedge \beta\wedge\alpha
=\frac{1}{b}d\alpha \wedge\beta\wedge\alpha \neq 0.
$$
Using that $\alpha =adf_1+bdf_3+\lambda\beta$ and
$\beta=df_4-f_3df_1$ and substituting in
$d\left(\frac{a}{b}\right)\wedge df_1\wedge \beta\wedge\alpha $ we
conclude that
$$
0\neq d\left(\frac{a}{b}\right)\wedge df_1\wedge
\beta\wedge\alpha=bd\left(\frac{a}{b}\right)\wedge df_1\wedge
df_4\wedge df_3=bdf_2\wedge df_1\wedge df_4\wedge df_3
$$
is nowhere vanishing outside $\mathrm{Sing}(\alpha \wedge
\beta)\cup \mathrm{Sing}(d\beta).$
\end{proof}

\section{Proof of the Theorem \ref{main}}

It follows from  \cite{Ge} and \cite[Appendix]{M} that  there exist a pair of $1$-form $(\alpha,\beta)$ inducing an   Engel distribution  $\mathcal{D}$ on $\mathbb{C}^4-( \mathrm{Sing}(\mathcal{D})\cup  \mathrm{Sing}(\mathcal{D}^1))$ satisfying
 \begin{itemize}
\item[(i)] $\alpha\wedge\beta \wedge d\alpha \not\equiv 0$
\item[(ii)] $\alpha\wedge\beta \wedge d\beta\equiv 0$
\item[(iii)] $\beta \wedge d\beta \not\equiv 0 $.
\end{itemize}
Since   $\mathrm{codim}(\mathrm{Sing}(\mathcal{D})\cup  \mathrm{Sing}(\mathcal{D}^1))\geqslant 2$
then by Hartogs's extension theorem we can extend the pair $(\alpha,\beta)$ inducing  the distribution $\mathcal{D}$.  Therefore $\mathcal{D}$ is induced by the Engel system $<\alpha,\beta>$.
Thus, the Engel vector field $\mathcal{L}(\mathcal{D})$ is induced by 
$$
\beta \wedge d\beta.
$$
In particular , $\mathrm{Sing}(\mathcal{L}(\mathcal{D}))=\mathrm{Sing}(\beta \wedge d\beta)\supseteq \mathrm{Sing}(d\beta)$. Then   $$\mathrm{codim}(\mathrm{Sing}(d\beta))\geqslant \mathrm{codim}(\mathrm{Sing}(\mathcal{L}(\mathcal{D}))\ \geqslant 3.$$
The result follows from Theorem \ref{teo}.

\section{Application to Engel systems on projective spaces}
A \emph{Pfaff system $\mathcal{I}$ of codimension $k$} on a complex
projective space $\mathbb{P}^n$ is a locally decomposable section
$$
\omega_{\mathcal{I}} \in
H^0(\mathbb{P}^n,\Omega^{k}_{\mathbb{P}^n}\otimes\mathcal{L}).
$$
This means that for all $p\in \mathbb{P}^n$ there exists a
neighborhood $U$ of $p$ and $1$-forms $\omega_1,\ldots ,\omega_k\in
\Omega^{1}_{U}$, such that $\omega_{\mathcal{I}}|_{U}=\omega_1\wedge
\cdots \wedge\omega_k.$

If $i: \mathbb P^k \to \mathbb P^n$ is a generic linear immersion
then $i^* \omega_{\mathcal{I}} \in H^0(\mathbb P^k,
\Omega^k_{\mathbb P^k} \otimes \mathcal L)$ is a section of a line
bundle, and its divisor of zeros reflects the tangencies between
$\mathcal{I}$ and $i(\mathbb P^k)$. The \emph{degree} of
$\mathcal{I}$ is, by definition, the degree of such a tangency
divisor. Set $d:=\deg(\mathcal{I})$. Since $\Omega^k_{\mathbb
P^k}\otimes \mathcal L = \mathcal O_{\mathbb P^k}( \deg(\mathcal L)
- k - 1)$, one concludes that $\mathcal L= \mathcal O_{\mathbb
P^n}(d+ k + 1)$.

We say that $\mathcal{I}$ is \emph{globally decomposable } if
$\omega_{\mathcal{I}}=\omega_1\wedge \cdots \wedge\omega_k$ for
suitable $\omega_i\in
H^0(\mathbb{P}^n,\Omega^{k}_{\mathbb{P}^n}\otimes\mathcal{L}_i).$

Besides, the Euler sequence implies that a section $\omega$ of
$\Omega^k_{\mathbb P^n} ( d + k + 1 )$ can be thought of as a
polynomial $k$-form on $\mathbb{C}^{n+1}$ with homogeneous
coefficients of degree $d + 1$, which we will still denote by
$\omega$, satisfying
\begin{equation}
\label{equirw} i_R \omega = 0
\end{equation}
where
$$
R=x_0 \frac{\partial}{\partial x_0} + \cdots + x_n
\frac{\partial}{\partial x_n}
$$
is the radial vector field. Thus the study of distributions of
degree $d$ on $\mathbb P^n$ reduces to the study of locally
decomposable homogeneous $k$-forms of degree $d+1$ on $\mathbb
C^{n+1}$ satisfying the relation (\ref{equirw}).

We will use the following Jouanolou's Lemma.

\begin{lema} \cite[Lemme 1.2, pp. 3]{Jouanolou}\label{Jouanolou}
If $\eta$ is a homogeneous $q$-form of degree $s$, then
$$
i_R d\eta + d( i_R \eta) = (q+s) \eta
$$
where $R$ is the radial vector field and $i_R$ denotes the interior
product or contraction with $R$.
\end{lema}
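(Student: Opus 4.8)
The identity is Cartan's magic formula specialized to the radial vector field, combined with Euler's relation for homogeneous functions. First I would reduce to a monomial form: since both sides of the asserted identity are $\mathbb{C}$-linear in $\eta$, it suffices to prove it for a form $\eta = x^\alpha\, dx_{i_1}\wedge\cdots\wedge dx_{i_q}$, where $x^\alpha = x_0^{\alpha_0}\cdots x_n^{\alpha_n}$ is a monomial with $|\alpha|=\alpha_0+\cdots+\alpha_n=s$, so that the coefficient is homogeneous of degree $s$ and the wedge-length is $q$.

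Next I would invoke Cartan's formula $\mathcal{L}_R=d\circ i_R+i_R\circ d$, which identifies the left-hand side $i_R d\eta+d(i_R\eta)$ with the Lie derivative $\mathcal{L}_R\eta$; it then remains only to show $\mathcal{L}_R\eta=(q+s)\eta$. Using that $\mathcal{L}_R$ is a derivation of the exterior algebra that commutes with $d$, I would compute on the two types of factor separately. On the coefficient, $\mathcal{L}_R(x^\alpha)=R(x^\alpha)=\sum_j x_j\,\partial_{x_j}(x^\alpha)=s\,x^\alpha$ by Euler's identity. On each differential, $\mathcal{L}_R(dx_i)=d(\mathcal{L}_R x_i)=d(i_R dx_i)=d(x_i)=dx_i$. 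Applying the Leibniz rule to $\eta=x^\alpha\, dx_{i_1}\wedge\cdots\wedge dx_{i_q}$ then yields one factor $s$ from the coefficient and $q$ copies of the factor $1$ from the $q$ differentials, giving $\mathcal{L}_R\eta=(s+q)\eta$, which is the claim.

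If one prefers an argument that does not cite Cartan's formula, I would instead expand $i_R d\eta$ and $d(i_R\eta)$ directly on the monomial $\eta$, using $i_R dx_{i_k}=x_{i_k}$. Each expansion produces a sum over the $q$ slots of the wedge, carrying the orientation signs $(-1)^{k-1}$ coming from contracting $R$ into the $k$-th factor. The cross-terms, in which $\partial_{x_j}$ of the coefficient is paired against a surviving differential, occur in both expansions with opposite sign and cancel; the diagonal terms survive, recombining into $s\,\eta$ (the Euler count from differentiating the coefficient and then contracting) plus $q\,\eta$ (from the $q$ matchings of $i_R dx_{i_k}=x_{i_k}$ against the $dx_{i_k}$ restored by $d$). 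The only delicate point in either route is this sign bookkeeping in the direct expansion; there is no conceptual obstacle, since all the content is carried by Euler's identity $R(x^\alpha)=s\,x^\alpha$ together with the normalization $i_R dx_i=x_i$.
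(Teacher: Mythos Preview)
Your proof is correct: reducing by linearity to monomial forms and then applying Cartan's formula $\mathcal{L}_R=i_Rd+di_R$ together with Euler's identity is the standard and cleanest route, and your computation $\mathcal{L}_R(dx_i)=dx_i$, $\mathcal{L}_R(x^\alpha)=s\,x^\alpha$, hence $\mathcal{L}_R\eta=(q+s)\eta$, is accurate. Note that the paper itself does not supply a proof of this lemma; it is simply quoted from Jouanolou \cite[Lemme 1.2, p.~3]{Jouanolou}, so there is no in-paper argument to compare against, but your approach is exactly the one found in that reference.
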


As an application of Theorem \ref{teo} we prove that a globally
decomposable Engel system on four dimensional projective space has a
singular set with atypical codimension. In fact, the expected
codimension of the singular set of a codimension $2$ should be $3$.
But, the following Theorem shows that the singular set of these
systems has codimension $\leq 2.$

\begin{teo}
Let $\mathcal{I}$ be a globally decomposable holomorphic Engel
system on $\mathbb{P}^4$. Then , either
$\mathrm{Sing}(d\mathcal{I}^{(1)})$ has a component of codimension
two, or $\mathrm{Sing}(\mathcal{I})$ has a component of codimension
one. Moreover, if $\mathrm{codim}(\mathrm{Sing}(\mathcal{I}))\geq 2$,
then $\mathrm{Sing}(\mathcal{I})$ has a component of codimension
two.
\end{teo}

\begin{proof}
Firstly we observe that on $\mathbb{P}^4$ all holomorphic
$1$-forms $\beta \in H^0(\mathbb{P}^4,\Omega^1_{\mathbb{P}^4}(s))$
satisfy $\beta\wedge (d\beta)^2 \equiv 0,$ since $\beta\wedge
(d\beta)^2$ is a $5$-form on $\mathbb{P}^4$. Moreover, we have that
$\mathrm{Sing}(d\mathcal{I}^{(1)})=\mathrm{Sing}(d\beta).$
Indeed, by lemma
\ref{Jouanolou} we have
\begin{equation}\label{Joua}
i_R(d\beta)=(s+1) \beta
\end{equation}
since $i_{R}\beta=0$. Thus, $\mathrm{Sing}(d\beta)\subset \mathrm{Sing}(\beta)$ and this implies that
$$
\mathrm{Sing}(d\mathcal{I}^{(1)}):=\mathrm{Sing}(d\beta)\cap \mathrm{Sing}(\beta)=\mathrm{Sing}(d\beta).
$$

Now, suppose that $\mathrm{codim}(\mathrm{Sing}(\mathcal{I}))\geq 2$ and
$\mathrm{codim}(\mathrm{Sing}(d\mathcal{I}^{(1)})) \geq 3$. Then, it
follows from Theorem \ref{teo} that there exist homogeneous
polynomials $f_1,f_2,f_3,f_4$ on $\mathbb{C}^5$ such that
$$
\alpha=df_4-f_3df_1, \beta=df_3-f_2df_1.
$$
In particular, we have that $\beta\wedge d\beta=-df_4\wedge
df_3\wedge df_1$ and $\alpha \wedge \beta\wedge d\alpha=df_1\wedge
df_2\wedge df_3\wedge df_4$.

Since $i_{R}\alpha=i_{R}\beta=0$, we conclude that
$k_4f_4-k_1f_3f_1=k_3f_3-k_1f_2f_1=0$, where $k_i=\deg(f_i)$,
$i=1,3,4.$ These relations imply that
$$
\beta\wedge d\beta=\alpha \wedge \beta\wedge d\alpha\equiv 0.
$$
This is a contradiction.
On the other hand, suppose that
$\mathrm{codim}(\mathrm{Sing}(\mathcal{I}))\geq 2$. Using the relation (\ref{Joua}) we have that
$$
\mathrm{Sing}(d\beta)\subset\mathrm{ Sing}(i_R(d\beta)\wedge
\alpha)=\mathrm{Sing}((s+1)\beta\wedge
\alpha)=\mathrm{Sing}(\beta\wedge \alpha).
$$
We conclude that $\mathrm{Sing}(\mathcal{I})$ has a component of
codimension two.
\end{proof}

\begin{exe}
Consider the differential system induced by the $1$-forms
$$\alpha=z_{0}^{2}dz_{4}-z_{0}z_{3}dz_{1}+(z_{1}z_{3}-z_{0}z_{4})dz_{0}$$
and
$$\beta=z_{0}^{2}dz_{3}-z_{0}z_{2}dz_{1}+(z_{1}z_{2}-z_{0}z_{3})dz_{0}.$$

A calculation shows that the pair of $1$-forms $(\alpha,\beta)$
satisfy the conditions $i)$,$ii)$ and $iii)$ of definition
\ref{defi Engel} and $i_{R}\alpha=i_{R}\beta=0$. Therefore, the
differential system $\mathcal{I}=\langle\alpha,\beta\rangle$ induces
a decomposable Engel system on $\mathbb{P}^4$.
We have that
$$\alpha \wedge\beta=z_{0}^{4}dz_{4}\wedge
dz_{3}-z_{0}^{3}z_{2}dz_{4}\wedge
dz_{1}+z_{0}^{2}(z_{1}z_{2}-z_{0}z_{3})dz_{4}\wedge dz_{0}-
$$
$$
-z_{0}^{3}z_{3}dz_{1}\wedge
dz_{3}-z_{0}z_{3}(z_{1}z_{2}-z_{0}z_{3})dz_{1}\wedge dz_{0}+
$$
$$
+z_{0}^{2}(z_{1}z_{3}-z_{0}z_{4})dz_{0}\wedge
dz_{3}-z_{0}z_{2}(z_{1}z_{3}-z_{0}z_{4})dz_{0}\wedge dz_{1}.
$$
Therefore $\mathrm{Sing}(\alpha \wedge\beta)=\{z_0=0\}$ has
codimension one. Moreover,
$$
\mathrm{Sing}(d\beta)=\{z_0=z_1=z_2=0\}
$$
has codimension $3$.
\end{exe}

%\begin{exe}
%Consider the differential system induced by the $1$-forms
%$$\alpha=z_{0}^{3}dz_{1}+z_{3}^2z_{0}dz_{4}-(z_{0}^2z_{1}-z_{3}^2z_{4})dz_{0}$$
%and
%$$\beta=z_{0}^{3}dz_{2}+z_{3}z_{4}z_0dz_{4}-(z_{0}^2z_{2}+z_{3}z_{4}^2)dz_{0}.$$

%A calculation show that the pair of $1$-forms $(\alpha,\beta)$
%satisfies the conditions $i)$,$ii)$ and $iii)$ of definition
%\ref{defi Engel} and $i_{R}\alpha=i_{R}\beta=0$. Therefore, the
%differential system $\mathcal{I}=\langle\alpha,\beta\rangle$
%induces a decomposable Engel system on $\mathbb{P}^4$. We can see
%that
%\begin{center}
%$\mathrm{Sing}(\alpha\wedge\beta)=\{z_0=0\}$ and
%$\mathrm{Sing}(d\beta)=\{z_0=z_3=z_4=0\}$.
%\end{center}
%\end{exe}

\noindent{\footnotesize \textsc{Acknowlegments.} We are grateful to
Arturo Fernendez-Perez, Rogerio Mol, Marcio Soares and Israel
Vainsencher for pointing out corrections in previous versions of
this paper. We thank the referee for for kindly pointing out several
suggestions and corrections.

}


\begin{thebibliography}{99}


\bibitem{Be}
A. Beauville, \emph{Holomorphic symplectic geometry: a problem
list.} Preprint arXiv:1002.4321. To appear in the Proceedings of the
conference Complex and Differential Geometry(Hanovre, 2009).

\bibitem{Bondal}
A. Bondal : \emph{Noncommutative deformations and Poisson brackets
on projective spaces}. Preprint MPI/93-67

\bibitem{BCGGG}
R.L Bryant, S.S Chern, R.B Gardner, H.L. Goldschmidt, P.A.
Griffiths, \emph{Exterior differential systems}, Mathematical
Sciences Research Institute Publications, Vol. 18, Springer-Verlag,
New York, 1991.

\bibitem{Cartan}
E. Cartan, \emph{Les Systemes Differentiels Exterieurs et Leurs
Applications Geometriques}, Hermann, Paris, 1971.

\bibitem{C}
D. Cerveau, \emph{Dominique Une application du theoreme de Frobenius
singulier: le theoreme de Darboux singulier}. C. R. Acad. Sci. Paris
Ser. A-B \textbf{288} (1979), no. 22, A1021--A1024





\bibitem{Engel}
F. Engel. \emph{Zur Invariantentheorie der Systeme Pfaff'scher
Gleichungen.} Berichte Verhandlungen der Koniglich Sachsischen
Gesellshaft der Wissenshaften Mathematisch-Physikalische Klasse,
Leipzig, 41,42:157-176;192-207, 1889,1890.

\bibitem{GKR}
M. Gaspar, A. Kumpera and C. Ruiz, \emph{Sur les systemes de Pfaff
en drapeau}. An. Acad. Brasil. Cienc. \textbf{55} (1983) 225-229.

\bibitem{Ge}
V. Gershkovich, \textit{Exotic Engel structures on $\mathbb{R}^4$}. Russian J. Math. Phys. 3 (1995),
no. 2, 207-226.

\bibitem{Jouanolou} J. P. Jouanolou, \emph{\'{E}quations de
Pfaff alg\'{e}briques.} Lecture Notes in Mathematics,
\textbf{708}. Springer, Berlin, 1979.

\bibitem{Alcides}
A. Lins Neto, \emph{A note on projective Levi flats and minimal sets
of algebraic foliations}. Annales de l'Institut Fourier,\textbf{ 49}
no. 4 (1999), 1369-1385

\bibitem{LPT}
F. Loray, J. V. Pereira, F Touzet, \emph{Singular foliations with
trivial canonical class}. arXiv:1107.1538v3.



\bibitem{Malgrange}
B. Malgrange, \emph{Frobenius avec singularites - 2. Le cas
general}. Inventiones Math.,\textbf{ 39 }(1977), 67-89

%\bibitem{Medeiros}
%A. Medeiros, \emph{Smooth singular solutions of hyperplane fields.
%I.} Annales scientifiques de l'�cole Normale Sup�rieure, S�r. 4,
%\textbf{23} no. 4 (1990), p. 657-666


\bibitem{M}
R. Murray, \textit{Nilpotent bases for a class of non-integrable distributions with
applications to trajectory generation for nonholonomic systems}, Mathematics
of Controls, Signals, and Systems, 7, pp. 58-75, (1994).

\bibitem{PR}
W. Pasillas-Lapine and W. Respondek, \emph{Contact systems and corank one involutive sub-distributions}, Acta Appl. Math. 69 (2001), 105-128

\bibitem{PS}
L. Sol\'a Conde, F. Presas, \emph{Holomorphic Engel Structures. }
arXiv:0811.1354


\bibitem{Po}
A. Polishchuk, \emph{Algebraic geometry of Poisson brackets}.
Algebraic geometry, \textbf{7}. J. Math. Sci. 84 (1997), no. 5,
1413-1444.

\bibitem{Z}
M. Zhitomirskii, \emph{Normal forms of germs of $2$-dimensional
distributions on $\mathbb{R}^4$}, Functional. Anal. Appl.\textbf{
24} No. 2 (1990), 150-152.

\end{thebibliography}
\end{document}